\newtheorem{thm}{Theorem}[section]
\newtheorem{lem}[thm]{Lemma}
\newtheorem{prop}[thm]{Proposition}
\theoremstyle{definition}
\newtheorem{defn}[thm]{Definition}
\newtheorem{subsec}[thm]{}
\newtheorem{nim}[thm]{}
\newtheorem{rem}[thm]{Remark}
\newcommand{\Tr}{\operatorname{Tr}}
\newcommand{\End}{\operatorname{End}}
\newcommand{\Aut}{\operatorname{Aut}}
\newcommand{\Br}{\operatorname{Br}}
\newcommand{\Sum}{\displaystyle\sum}
\begin{document}

\begin{frontmatter}
\title{Remarks on the extended Brauer quotient}
\author{Tiberiu Cocone\c t} \ead{tiberiu.coconet@math.ubbcluj.ro}
\address{Faculty of Economics and Business Administration, Babe\c s-Bolyai University, Str. Teodor Mihali, nr.58-60 , 400591 Cluj-Napoca, Romania}
\author{Andrei Marcus} \ead{andrei.marcus@math.ubbcluj.ro}
\address{Faculty of Mathematics, Babe\c s-Bolyai University, Cluj-Napoca, Romania}
%\author{Constantin-Cosmin Todea\corref{cor1}} \ead{Constantin.Todea@math.utcluj.ro}
%\address{Department of Mathematics, Technical University of Cluj-Napoca, Str. G. Baritiu 25,  Cluj-Napoca 400027, Romania}
\cortext[cor1]{Corresponding author}

\begin{abstract}

\end{abstract}

\begin{keyword}
$G$-algebras, $G$-interior algebras, group graded algebras, pointed groups.
\MSC 20C20, 20C05
\end{keyword}
\end{frontmatter}

%%%%%%%%%%%%%%%%%%%%%%%%%%%%%%%%%%%%%%%%%%%%%%%%%%%%%%%%%%%%%%%%%%%%%%%%%%%%%
\section{Introduction}\label{sec1}

Let $G$ be a group, and let $A$ be a $G$-algebra over a complete discrete valuation ring $\mathcal{O}$ with residue field $k$ of characteristic $p>0$. The well-known Brauer quotient $A(P)$ with respect to a $p$-subgroup $P$ of $G$ (introduced by M.~Brou\'e and L.~Puig, see \cite[\S 11]{The}) is an $N_G(P)$-algebra. If moreover, $A$ is $G$-interior (that is, $A$ is endowed with a unitary algebra homomorphism $\mathcal{O}G\to A$), then $A(P)$ becomes a $C_G(P)$-interior $N_G(P)$-algebra. This means that one may construct, as in \cite[Chapter 9]{Pu}, the $N_G(P)/C_G(P)$-graded $N_G(P)$-interior algebra $A(P)\otimes_{C_G(P)}N_G(P)$, so $A(P)$ is extended by automorphisms of $P$ given by conjugation with elements of $G$.

L.~Puig and Y.~Zhou \cite{PuZh} extended $A(P)$ by all automorphisms of $P$, obtaining the so called {\it extended Brauer quotient} $\bar N_A^{\Aut(P)}(P)$ as an $N_G(P)$-interior $k$-algebra. The interiority assumption is necessary, because the main feature used is the $\mathcal{O}(P\times P)$-module structure of $A$. This construction was further generalized by T.~Cocone\c t and C.-C.~Todea \cite{COTO} to the case of $H$-interior $G$-algebras, where $H$ is a normal subgroup of $G$.

Our aim here is to unify and generalize these constructions, by introducing an extended Brauer quotient of a group graded algebra. The main ingredients of our construction are a $\bar G$-graded algebra $A$, a group homomorphism $P\to \bar G$ (which induces a $\bar G$-grading on the group algebra $\mathcal{O}P$), and a homomorphism $\mathcal{O}P\to A$ of $\bar G$-graded algebras.

In Section 2 below we recall the Puig and Zhou definition of $\bar N_A^{\Aut(P)}(P)$, pointing out its $\Aut(P)$-graded algebra structure. Our alternative construction in Section 3 is based on the easy observation that if $A$ is a $\bar G$-graded $P$-algebra with identity component $B$ such that the action of $P$ on $\bar G$ is trivial, then the Brauer quotient $A(P)$ inherits the $\bar G$-grading such that the identity component of $A(P)$ is $B(P)$. Here we apply the classical Brauer quotient to the $\Aut(P)$-graded algebra $\tilde A=A\otimes_{\mathcal{O}}\mathcal{O}\Aut(P)$, and we get that $\tilde A(P)$ is isomorphic to $\bar N_A^{\Aut(P)}(P)$ as $\Aut(P)$-graded algebras. In Section 4 we construct the extended Brauer quotient of a $\bar G$-graded $P$-interior algebra $A$ as mentioned above, this time with $P$ acting nontrivially on $\bar G$. We also discuss the exact relationship to the construction from \cite{COTO}. Section 5 investigates the extended Brauer quotient of tensor products of $P$-interior algebras,  in Section 6 we give an application towards correspondences for covering blocks.

Our general notations and assumptions are standard, and  closely follow \cite{The}, \cite{Pu} and \cite{Ma}.

%%%%%%%%%%%%%%%%%%%%%%%%%%%%%%%%%%%%%%%%%%%%%%%%%%%%%%%%%%%%%%%%%%%%%%%
\section{The extended Brauer quotient} \label{sec2}
\subsection{The  construction of Puig and Zhou}\label{general_const}

We begin with a $p$-group $P$ and a $P$-interior algebra $A.$ Let $\varphi\in \Aut(P)$, and as in \cite{PuZh},  we consider the $\varphi$-twisted  diagonal
\[\Delta_{\varphi}(P)=\{(u,\varphi(u))\mid u\in P\}.\] Then the set of  $\Delta_{\varphi}(P)$-fixed elements, is the following $\mathcal{O}$-submodule of $A$:
 \[A^{\Delta_{\varphi}(P)}=\{a\in A\mid ua=a\varphi(u) \mbox{ for any } u\in P\}.\]
Further, we consider $Q<P$ and denote by $A_{\Delta_{\varphi}(Q)}^{\Delta_{\varphi}(P)}$ the $\mathcal{O}$-module consisting of elements of the form
   \[\Tr_{\Delta_{\varphi}(Q)}^{\Delta_{\varphi}(P)}(c)=\sum_{u\in [P/Q]}u^{-1}c\varphi(u),\] where $c\in A^{\Delta_{\varphi}(Q)}.$
At last, we denote by $A(\Delta_{\varphi}(P))$ the quotient
\[A(\Delta_{\varphi}(P))=A^{\Delta_{\varphi}(P)}/\Sum_{Q<P}A_{\Delta_{\varphi}(Q)}^{\Delta_{\varphi}(P)},\]
and we obtain the usual Brauer homomorphism
\[\Br_{\Delta_{\varphi}(P)}:A^{\Delta_{\varphi}(P)}\to A(\Delta_{\varphi}(P)).\]
If $K$ is a subgroup of $\Aut(P)$, it is easily checked that the external direct sum $\bigoplus_{\varphi\in K}A^{\Delta_{\varphi}(P)}$ is an algebra, while its subset
$\bigoplus_{\varphi\in K}\sum_{Q<P}A^{\Delta_{\varphi}(P)}_{\Delta_{\varphi}(Q)}$
is a two-sided ideal, hence we have the following definition.

\begin{defn}[{\cite{PuZh}}]  The {\it extended Brauer quotient} associated to the $P$-interior algebra $A$ and the subgroup $K$ of $\Aut(P)$ is the external direct sum
\[\bar{N}^K_A(P):=\bigoplus_{\varphi\in K}A^{\Delta_{\varphi}(P)}/\bigoplus_{\varphi\in K}\sum_{Q<P}A^{\Delta_{\varphi}(P)}_{\Delta_{\varphi}(Q)}\simeq \bigoplus_{\varphi\in K} A(\Delta_{\varphi}(P)).\]
\end{defn}

\begin{rem} Note that in this case, one  deduces easily from the details given in \cite[Section 3]{PuZh} and \cite[Section 3]{PuZhII} that $\bar{N}^K_A(P)$ is a $K$-graded algebra, and the map $\Br_P^K:=\bigoplus_{\varphi\in K}\Br_{\Delta_{\varphi}(P)}$ is a homomorphism of $K$-graded algebras. This fact will become even more transparent in the next section.
\end{rem}

%%%%%%%%%%%%%%%%%
\subsection{The case of  $G$-interior algebras}

In addition to the situation of subsection \ref{general_const} we assume the $A$ is a $G$-interior algebra, where $G$ is a (not necessarily finite) group, and $P$ is a $p$-subgroup of $G.$  Conjugation induces the  the group homomorphisms
\[N_G(P)\to \Aut(P) \qquad\mathrm{and}\qquad N_G(P)/C_G(P)\to \Aut(P),\tag{1}\label{group_homo}\] and for the subgroup $K$ in $\Aut(P),$ $N_G^K(P)$ denotes the inverse image of $K$ in $N_G(P)$. If $x\in N_G(P)$, we use denote by $\varphi_x$ the automorphism of $P$ given by $\varphi_x(u)=u^x=x^{-1}ux$ for all $u\in P.$

In this setting, we obtain some additional properties of the extended Brauer quotient (the details are   left  to the reader).

\begin{prop} \label{p:ngpcgp}  With the above notation, the following statements hold:
\begin{enumerate}
\item[{\rm1)}] $\bar{N}^K_A(P)$ is a $K$-graded $N^K_G(P)$-interior algebra;
%\item[{\rm2)}] If $K=\Aut(P)$, then  $N^K_G(P)$ is a normal subgroup of $N_G(P)$,  and $\bar{N}^K_A(P)$ is a $N^K_G(P)$-interior $N_G(P)$-algebra;
\item[{\rm2)}] If $K=N_G(P)/C_G(P)$, then we have the isomorphism
          \[\bar{N}^K_{A}(P)\simeq A(P)\otimes_{kC_G(P)}kN_G(P)\] of $N_G(P)/C_G(P)$-graded $N_G(P)$-interior algebras.
\end{enumerate}\end{prop}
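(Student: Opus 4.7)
My plan is to handle the two parts in sequence, exploiting the explicit description of each graded component. For part 1 the $K$-grading is already given by the remark preceding the statement, so only the $N^K_G(P)$-interior structure needs work. I would define the structural map $\iota \colon \mathcal{O}N^K_G(P) \to \bar{N}^K_A(P)$ by sending $x \in N^K_G(P)$ to the class of $x$ (viewed in $A$ via the $G$-interior structure) inside $A(\Delta_{\varphi_x}(P))$. The required inclusion $x \in A^{\Delta_{\varphi_x}(P)}$ is immediate: for every $u \in P$ one has $u\cdot x = x\cdot(x^{-1}ux) = x\cdot\varphi_x(u)$. Extending $\iota$ linearly, unitality is clear, and multiplicativity reduces to the identity $\varphi_{xy} = \varphi_y\circ\varphi_x$ in $K$, which matches the product of $x$ and $y$ in the external direct sum.

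For part 2 I plan to define an isomorphism $\Psi$ componentwise. Fix $\bar x \in N_G(P)/C_G(P)$ with representative $x \in N_G(P)$. The key computation is that $a \in A^{\Delta_{\varphi_x}(P)}$ if and only if $ax^{-1} \in A^P$, so $A^{\Delta_{\varphi_x}(P)} = A^P\cdot x$; moreover the trace formula reads $\Tr^{\Delta_{\varphi_x}(P)}_{\Delta_{\varphi_x}(Q)}(bx) = \Tr^P_Q(b)\cdot x$ for $b \in A^Q$ and $Q < P$. Hence the $\bar x$-component of $\bar{N}^{K}_A(P)$ is $k$-linearly isomorphic to $A(P)$ via $\Br_{\Delta_{\varphi_x}(P)}(bx) \mapsto \Br_P(b)$, and I would set $\Psi(\Br_{\Delta_{\varphi_x}(P)}(bx)) := \Br_P(b)\otimes_{kC_G(P)} x$.

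The remaining checks are (i) independence of the choice of representative $x$: if $c \in C_G(P)$ then $bx = (bc^{-1})(cx)$ with $bc^{-1} \in A^P$, and $kC_G(P)$-balancing in the target absorbs the factor $c$; (ii) multiplicativity: since $x \in N_G(P)$ normalizes $P$ one has ${}^x b' \in A^P$ for $b' \in A^P$, so $(bx)(b'x') = b\cdot{}^x b'\cdot xx'$ on the left matches the smash-product-style rule $(\Br_P(b)\otimes x)(\Br_P(b')\otimes x') = \Br_P(b\cdot{}^x b')\otimes xx'$ on the right; and (iii) compatibility with the $N_G(P)$-interior structures, which follows from part 1 applied to the canonical element $x$. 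The main obstacle I anticipate is pinning down the multiplication on $A(P)\otimes_{kC_G(P)} kN_G(P)$: this relative tensor product carries both a grading and an interior structure, and the precise product formula must be made explicit before $\Psi$ can be verified to be a homomorphism. Once this convention is fixed, bijectivity of $\Psi$ is automatic component by component.
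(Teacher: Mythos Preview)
Your proposal is correct and follows essentially the same line as the paper. For part~1 your observation $u\cdot x = x\cdot\varphi_x(u)$ is exactly the paper's one-line check that $u^{-1}x\varphi_x(u)=x$. For part~2 both arguments rest on the correspondence $\bar a\otimes x \leftrightarrow \overline{ax}$; the paper writes the map in the direction $A(P)\otimes_{kC_G(P)}kN_G(P)\to \bar N_A^K(P)$ and simply notes that its restriction to the identity component is an isomorphism (which suffices since both sides are crossed products over $A(P)$ with units $x$), whereas you construct the inverse explicitly via the identity $A^{\Delta_{\varphi_x}(P)}=A^P\cdot x$ and verify bijectivity component by component. These are the same idea, with your version trading the crossed-product shortcut for a direct computation.
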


\begin{proof} 1)  We only need to notice that any $x\in N_G^K(P)$ verifies $u^{-1}x\varphi_x(u)=x.$

%2) We observe that $N_G(P)$ acts on $\Aut(P)$ via the action introduced in \cite[Proposition 2.2]{COTO} and it also acts on homomorphism (\ref{group_homo}).

2) We  define the $N_G(P)/C_G(P)$-graded map
\[A(P)\otimes_{kC_G(P)}kN_G(P) \to \bar{N}^K_{A}(P), \qquad \bar{a}\otimes x\mapsto \overline{ax},\]
whose restriction to the identity component is an isomorphism.
\end{proof}

\begin{rem} Note that if $K=N_G(P)/C_G(P)$, then $\bar{N}^K_{\mathcal{O}G}(P)$ is just the group algebra $kN_G(P)$ considered with the obvious $K$-grading. Moreover, the construction of $\bar{N}^K_{A}(P)$ is clearly functorial in $A$, so the $N_G(P)$-interior algebra structure of $\bar{N}^K_{A}(P)$ comes from applying the construction to the algebra map $\mathcal{O}G\to A$.
\end{rem}

%%%%%%%%%%%%%%%%%%%%%%%%%%%%%%%%%%%%%%%%%%%%%%%%%%%%%%%%%%%%%%%%%%%%%%%%%%%%%%%%%%%%%%%%%%%%%%%%%%%%%%%%%%%%%
\section{An alternative construction}

\begin{nim} The $\mathcal{O}P$-interior algebra $A$ admits an obvious $(\mathcal{O}P,\mathcal{O}P)$-bimodule structure. Consider the group algebra $\mathcal{O}[P\rtimes \Aut(P)],$ of the semidirect product $P\rtimes \Aut(P).$ This algebra is also a left $\mathcal{O}P$-module, hence it makes sense to consider the $\Aut(P)$-graded $(\mathcal{O}P,\mathcal{O}P)$-bimodule
\[\tilde{A}:=A\otimes_{P}\mathcal{O}(P\rtimes \Aut(P)).\]
We may also use the isomorphism
\[\tilde A\simeq A\otimes_{\mathcal{O}}\mathcal{O} \Aut(P)\]
of $\mathcal{O}$-modules, which becomes an isomorphism of $(\mathcal{O} P, \mathcal{O} P)$-bimodules, by defining the bimodule structure of $A\otimes_{\mathcal{O}}\mathcal{O} \Aut(P)$ as follows:
\[u(a\otimes \phi)v=u\cdot a\cdot\phi(v)\otimes \phi,\]
for $u,v\in P$ and $\phi\in \Aut(P)$. Then we regard $A\otimes_{\mathcal{O}}\mathcal{O} \Aut(P)$ as an $\Aut(P)$-graded $P$-algebra with $P$-action given by
\[(a\otimes \phi)^u=u^{-1}\cdot a\cdot\phi(u)\otimes \phi,\]
\end{nim}

With the notations of Sections \ref{sec2} and 3 we have:

\begin{thm} \label{theisomorphismofquotients} There  is an isomorphism
\[\tilde{A}(P)\simeq \bar{N}_A^{\Aut(P)}(P)\]
of $\Aut(P)$-graded algebras, where $\tilde{A}(P)$ is the usual Brauer quotient of $\tilde{A}$.
\end{thm}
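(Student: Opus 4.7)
The plan is to identify the isomorphism componentwise in the $\Aut(P)$-grading of $\tilde A = \bigoplus_{\phi\in\Aut(P)} A\otimes \phi$, and then to check that the descent to the Brauer quotient recovers the Puig--Zhou construction as a graded algebra.

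First, since the $P$-action $(a\otimes \phi)^u = u^{-1}\cdot a\cdot \phi(u)\otimes \phi$ only affects the $A$-tensorand, it preserves the grading, and so $\tilde A^P$ splits as the direct sum of the fixed points of each graded piece. The condition $u^{-1}a\phi(u) = a$ for every $u\in P$ is exactly $ua = a\phi(u)$, i.e., $a\in A^{\Delta_\phi(P)}$. Hence $\tilde A^P = \bigoplus_\phi A^{\Delta_\phi(P)}\otimes \phi$. The same bookkeeping on the relative trace gives
\[\Tr^P_Q(a\otimes \phi) = \sum_{u\in[P/Q]} u^{-1}\cdot a\cdot \phi(u)\otimes \phi = \Tr^{\Delta_\phi(P)}_{\Delta_\phi(Q)}(a)\otimes \phi\]
for $a\in A^{\Delta_\phi(Q)}$, so $\tilde A^P_Q = \bigoplus_\phi A^{\Delta_\phi(P)}_{\Delta_\phi(Q)}\otimes \phi$. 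Dividing out produces a canonical $\Aut(P)$-graded $\mathcal{O}$-module isomorphism
\[\tilde A(P) \simeq \bigoplus_{\phi\in\Aut(P)} A(\Delta_\phi(P)) = \bar N_A^{\Aut(P)}(P).\]

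It remains to verify multiplicativity. On fixed homogeneous elements the product on $\tilde A$ reads $(a\otimes \phi)(b\otimes \psi) = ab\otimes \psi\phi$, and the crucial identity $A^{\Delta_\phi(P)}\cdot A^{\Delta_\psi(P)}\subseteq A^{\Delta_{\psi\phi}(P)}$ follows from the calculation $u(ab) = (ua)b = a\phi(u)b = ab\,(\psi\phi)(u)$. The relative-trace submodule is a two-sided ideal of $\tilde A^P$: to absorb $a\in A^{\Delta_\phi(P)}$ into a relative trace, one reindexes the defining sum by $v = \phi^{-1}(u)$, which converts a trace along $[P/Q]$ into one along $[P/\phi^{-1}(Q)]$ (a proper subgroup of $P$, since $\phi\in\Aut(P)$). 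Under our identification, the descended product matches the external-direct-sum product from \cite{PuZh} that defines $\bar N_A^{\Aut(P)}(P)$, both being multiplication in $A$ placed in the graded component indexed by $\psi\phi$.

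The main subtlety is this last step: one has to be careful with the (anti)composition convention in the grading when comparing the internal product on $\tilde A(P)$ with the external Puig--Zhou product, and verify the two-sided-ideal property via the reindexing above. Once this is done, Steps 1 and 2 give the graded $\mathcal{O}$-module isomorphism for free, and multiplicativity then promotes it to an isomorphism of $\Aut(P)$-graded algebras.
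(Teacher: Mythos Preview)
Your argument is correct and follows essentially the same route as the paper: decompose $\tilde A^P$ and the relative trace submodules along the $\Aut(P)$-grading, identify each homogeneous piece with $A^{\Delta_\varphi(P)}$ (respectively $A^{\Delta_\varphi(P)}_{\Delta_\varphi(Q)}$), and pass to the quotient. In fact you supply more detail than the paper does---the paper leaves the relative-trace identification, the ideal property, and the multiplicativity check implicit---and your explicit warning about the (anti)composition convention in the $K$-grading is well taken, since the paper never spells out the product on $\tilde A$.
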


\begin{proof} As the $p$-group  $P$ is a  normal subgroup of   $P\rtimes \Aut(P)$, we get the decomposition
\[\tilde{A}(P)=\bigoplus_{\varphi\in \Aut(P)}(A\otimes (1,\varphi))(P).\]
If $a\otimes (1,\varphi)\in (A\otimes (1,\varphi))^{P}$, then
\[u^{-1}\cdot a\otimes (1,\varphi)\cdot u=u^{-1}\cdot a\otimes (1,\varphi) (u,1)=u^{-1}a\varphi(u)\otimes (1,\varphi)=a\otimes (1,\varphi).\]  Then $a\in A^{\Delta_{\varphi}(P)},$ and consequently
\[(A\otimes (1,\varphi))(P) \to \bar{N}_A^{\varphi}(P), \qquad \overline{a\otimes (1,\varphi)}\mapsto \overline{a},\]
is a well-defined map of $\mathcal{O}$-modules for every $\varphi\in \Aut(P).$ We extend this map to a $\Aut(P)$-graded map between these two modules and we notice that, with all the above identifications, it is actually an isomorphism of algebras.
\end{proof}

\begin{rem} We often use subgroups of $P$, and we obviously have the isomorphism
\[(A\otimes_{\mathcal{O}Q} \mathcal{O}[Q\rtimes K])(Q)\simeq \bar{N}_A^K(Q)\]
of $K$-graded algebras, for any subgroups $Q \leq P$ and $K\leq \Aut(Q).$ \end{rem}

%%%%%%%%%%%%%%%%%%%%%%%%%%%%%%%%%%%%%%%%%%%%%%%%%%
\section{The extended Brauer quotient of a group graded algebra} \label{s:graded}

In this paragraph we set $\bar{G}:=G/H,$ where $H$ is a normal subgroup of the finite group $G,$ $P$ is a $p$-subgroup of $G$, and let
\[A:=B\otimes_{\mathcal{O}H}\mathcal{O}G\] for some $H$-interior $G$-algebra $B$, so $A$ is the $G$-interior $\bar G$-graded algebra induced from $B$.

The following lemma says that  we restrict ourselves, without loss, to a certain   subgroup  of $\Aut(P)$.

\begin{lem}\label{zero_quotient_prop} Let $\varphi\in \Aut(P)$, and let $O(\bar{x})$ be the orbit of $\bar{x}\in \bar G$ under the action of $\Delta_{\varphi}(P)$ on $\bar{G}.$  If  $\mid O(\bar{x}) \mid \neq 1$ then $(\bigoplus_{\bar{z}\in O(\bar{x})}B\otimes z)(\Delta_{\varphi}(P))= 0.$
\end{lem}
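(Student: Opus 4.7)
The plan is a standard orbit--stabilizer argument identifying fixed vectors with transfers from a proper subgroup. Let $M:=\bigoplus_{\bar{z}\in O(\bar{x})}B\otimes z$, using a fixed system of coset representatives. First I would unwind the action: since the grading of $u^{-1}(b\otimes z)\varphi(u)$ is $\bar{u}^{-1}\bar{z}\,\overline{\varphi(u)}$, the induced action of $\Delta_{\varphi}(P)$ on $\bar{G}$ is $\bar{z}\mapsto \bar{u}^{-1}\bar{z}\,\overline{\varphi(u)}$. Define
\[P_0:=\{u\in P\mid \bar{u}^{-1}\bar{x}\,\overline{\varphi(u)}=\bar{x}\};\]
this is a subgroup of $P$, and $\Delta_{\varphi}(P_0)$ is the stabilizer of $\bar{x}$ in $\Delta_{\varphi}(P)$. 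Orbit--stabilizer gives a bijection $P_0\backslash P \to O(\bar{x})$ sending $P_0 u\mapsto \bar{u}^{-1}\bar{x}\,\overline{\varphi(u)}$, so the hypothesis $|O(\bar{x})|\neq 1$ forces $P_0<P$ properly.

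Next, take $c\in M^{\Delta_{\varphi}(P)}$, decompose $c=\sum_{\bar{z}\in O(\bar{x})}c_{\bar{z}}$ into graded components, and fix a transversal $T$ of $P_0\backslash P$ in $P$. Comparing graded components in $u^{-1}c\varphi(u)=c$: the $\bar{y}$-component of the left-hand side is $u^{-1}c_{\bar{u}\bar{y}\,\overline{\varphi(u)}^{-1}}\varphi(u)$, so taking $u\in P_0$ with $\bar{y}=\bar{x}$ yields $c_{\bar{x}}\in A^{\Delta_{\varphi}(P_0)}$, while taking general $u\in P$ with $\bar{y}=\bar{u}^{-1}\bar{x}\,\overline{\varphi(u)}$ yields $c_{\bar{u}^{-1}\bar{x}\,\overline{\varphi(u)}}=u^{-1}c_{\bar{x}}\varphi(u)$. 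Summing the latter over $t\in T$ identifies
\[c=\sum_{t\in T}t^{-1}c_{\bar{x}}\varphi(t)=\Tr^{\Delta_{\varphi}(P)}_{\Delta_{\varphi}(P_0)}(c_{\bar{x}})\in A^{\Delta_{\varphi}(P)}_{\Delta_{\varphi}(P_0)}\subseteq \sum_{Q<P}A^{\Delta_{\varphi}(P)}_{\Delta_{\varphi}(Q)},\]
since $P_0<P$ properly. As $c$ was arbitrary, the Brauer quotient $M(\Delta_{\varphi}(P))$ vanishes.

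The only point needing care is the bookkeeping: verifying that the induced action on $\bar{G}$ is well-defined (which uses normality of $H$ in $G$), that the submodule $B\otimes z$ does not depend on the chosen representative $z\in \bar{z}$, and that the trace convention agrees with the formula in Section~\ref{sec2}. Once these routine checks are out of the way, the argument is just the standard identification of fixed vectors in a transitive permutation representation with images of the relative transfer from the point stabilizer.
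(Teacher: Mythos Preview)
Your proof is correct and follows essentially the same route as the paper: identify a $\Delta_{\varphi}(P)$-fixed element as the relative trace of its $\bar{x}$-component from the (proper) stabilizer of $\bar{x}$. Your write-up is in fact more carefully organized than the paper's sketch, which carries out the same orbit--stabilizer computation but phrases it in terms of choosing representatives $z_i$ and taking $Q$ to be the stabilizer of $b_z\otimes z$.
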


\begin{proof} Consider the element $a=\Sum b_{z_i}\otimes z_i$ such that $u^{-1}a\varphi (u)=a.$ Since the elements $z_i$ are all representatives of the classes of an orbit, we can choose them such that for any $u\in P$ we obtain $u^{-1}z_i\varphi(u)=z_j.$ It follows that $b_{z_i}^u=b_{z_j}$, and then  there is one element, say $b_z,$ such that $b_z^u=b_{z_i}$ for any $i$ and any $u\in P.$ Hence \[a=\Tr_{\Delta_{\varphi}(Q)}^{\Delta_{\varphi}(P)}(b_z\otimes z),\] where $Q$ is the stabilizer of $b_z\otimes z$ in $P.$
\end{proof}

\begin{subsec} {\rm The above lemma gives the motivation to introduce two subgroups of $\Aut(P)$, because it implies that $(A\otimes\phi)(P)=0$ for $\phi\in \Aut(P)$ not satisfying $\overline{\phi(u)}=\overline{u^g}$ in $\bar G$, for some $g\in G$. So let
\[\Aut_{\bar G}(P)=\{\varphi\in \Aut(P)\mid \overline{\varphi(u)}=\overline{u^{{g}}} \mbox{ for some } \bar{g}\in \bar{G} \mbox{ and for any } u\in P\}\]
and
\[\Aut_{\bar 1}(P)=\{\varphi\in \Aut(P)\mid \overline{\varphi(u)}= \bar u \mbox{ for any } u\in P\}.\]
Denote also \[K:=\Aut_{\bar G}(P), \qquad K_1:=\Aut_{\bar 1}(P),\] and let $\tilde{A}:=A\otimes_{\mathcal{O}P}\mathcal{O}[P\rtimes K]$ as in Section 3.

Finally, let $N^K_{\bar{G}}(\bar{P})$ denote the subgroup of $N_{\bar{G}}(\bar{P})$ whose elements define an element of $K$ and let $U$ be the inverse image of $N^K_{\bar{G}}(\bar{P})$ in $G.$ Also let $U'$ be the inverse image of $C_{\bar{G}}(\bar{P})$ in $G$. Observe that $N^K_G(P)=N_G(P)$ and $N^{K_1}_G(P)=N_G(P)\cap U'$.
}\end{subsec}

\begin{lem} The group $\Aut_{\bar 1}(P)$ is a normal subgroup of $\Aut_{\bar G}(P)$, hence $U'$ is normal in $U.$ Furthermore, we have the isomorphisms
\[\Aut_{\bar G}(P)/\Aut_{\bar 1}(P)\simeq N^K_{\bar{G}}(\bar{P})/C_{\bar{G}}(\bar{P})\simeq U/U'.\]
\end{lem}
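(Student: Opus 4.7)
The plan is to realise both $\Aut_{\bar G}(P)/\Aut_{\bar 1}(P)$ and $N^K_{\bar G}(\bar P)/C_{\bar G}(\bar P)$ as the same subgroup of $\Aut(\bar P)$, and then transport the latter quotient to $U/U'$ via the correspondence theorem applied to the projection $G\to \bar G$ with kernel $H$.

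First I would build a group homomorphism $\Phi\colon K\to \Aut(\bar P)$ as follows. For $\varphi\in K$, pick $\bar g\in \bar G$ with $\overline{\varphi(u)}=\overline{u^g}$ for all $u\in P$; since $\varphi(u)\in P$, we have $\overline{u^g}\in \bar P$, hence $\bar g\in N_{\bar G}(\bar P)$, and conjugation by $\bar g$ yields an automorphism of $\bar P$. This automorphism depends only on $\varphi$: any two valid choices $\bar g,\bar g'$ satisfy $\overline{g^{-1}g'}\in C_{\bar G}(\bar P)$ and therefore induce the same inner automorphism of $\bar P$. Setting $\Phi(\varphi)$ to be that automorphism, multiplicativity reduces to the short computation $(\bar u^{g_2})^{g_1}=\bar u^{g_2 g_1}$. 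By construction $\ker \Phi = \Aut_{\bar 1}(P)$, which already proves $\Aut_{\bar 1}(P)\trianglelefteq \Aut_{\bar G}(P)$.

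Next I would invoke the ordinary conjugation map $\Psi\colon N^K_{\bar G}(\bar P)\to \Aut(\bar P)$, whose kernel is $C_{\bar G}(\bar P)$; note that $C_{\bar G}(\bar P)\le N^K_{\bar G}(\bar P)$ because the identity of $\bar P$ is induced by $\mathrm{id}_P\in K$. The images of $\Phi$ and $\Psi$ coincide: by the very definitions of $K$ and $N^K_{\bar G}(\bar P)$, both equal the set of those inner automorphisms of $\bar P$ coming from $\bar G$ that are also induced by some element of $\Aut(P)$. Applying the first isomorphism theorem to $\Phi$ and to $\Psi$ then yields the isomorphism $\Aut_{\bar G}(P)/\Aut_{\bar 1}(P)\simeq N^K_{\bar G}(\bar P)/C_{\bar G}(\bar P)$.

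For the final piece, since $H\subseteq U'\subseteq U$, the correspondence theorem for $G\to \bar G$ immediately turns $C_{\bar G}(\bar P)\trianglelefteq N_{\bar G}(\bar P)$ into $U'\trianglelefteq U$ and produces $U/U'\simeq N^K_{\bar G}(\bar P)/C_{\bar G}(\bar P)$. The one place that requires genuine care is the well-definedness of $\Phi$, specifically that $\bar u=\bar v$ with $u,v\in P$ forces $\overline{\varphi(u)}=\overline{\varphi(v)}$; this is exactly where the normality of $H$ in $G$ enters, through $g^{-1}Hg=H$. Everything else is routine bookkeeping.
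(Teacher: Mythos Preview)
Your proof is correct and follows essentially the same route as the paper: the paper defines the map $\varphi\mapsto \bar g\in N_{\bar G}(\bar P)$, checks that it is well-defined modulo $C_{\bar G}(\bar P)$, identifies its kernel as $\Aut_{\bar 1}(P)$, and declares the second isomorphism obvious. Your version differs only cosmetically by composing with the embedding $N^K_{\bar G}(\bar P)/C_{\bar G}(\bar P)\hookrightarrow\Aut(\bar P)$ and deducing normality from $\Aut_{\bar 1}(P)=\ker\Phi$ rather than via the direct conjugation computation the paper gives first.
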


\begin{proof}  If $\varphi_1\in \Aut_{\bar 1}(P)$ then $\overline{\varphi(u)}=\bar{u}$ for all $u\in P.$ Hence, if $\varphi\in \Aut_{\bar G}(P)$ with $\overline{\varphi(u)}=\overline{u^{\bar{g}}},$ we get
\[\overline{(\varphi^{-1}\circ \varphi_1 \circ \varphi)(u)}=\overline{(\varphi_1 \circ \varphi)(u)^{g^{-1}}}=\overline{\varphi(u)^{g^{-1}}}=\bar{u}.\]
Further if $x\in \bar{g}$ then $\overline{\varphi(u)}=\overline{u^{g}}=\overline{u^{x}}$ and then $\overline{g^{-1}x}\in C_{\bar{G}}(\bar{P}).$ With all of the above, the map
\[\Aut_{\bar G}(P)\ni \varphi \mapsto \bar{g}\in  N_{\bar{G}}(\bar{P}) \]
gives the first isomorphism. The second isomorphism is obvious.
\end{proof}

We will denote by $\tilde{\varphi}$ the image of $\varphi$ in the quotient group $\Aut_{\bar G}(P)/\Aut_{\bar 1}(P)$.
\begin{thm}\label{graded_quotient_version} The algebra $\bar{N}^{\Aut(P)}_A(P)$, as constructed in \ref{general_const}, is the $U/U'$-graded $N_G(P)$-interior algebra $\bar{N}^{K}_A(P)$ with identity component the $N^{K_1}_G(P)$-interior algebra
\[\bar{N}^{K_1}_A(P)=\bigoplus_{ g'\in U'/H}\bar{N}^{K_1}_{B\otimes g'}(P),\]
and for any $\tilde{g}\in U/U'$ (corresponding to $\tilde{\varphi}$), the $\tilde{g}$-component is
\[\bar{N}^{K}_A(P)_{\tilde{g}}=\bigoplus_{\varphi_{\bar{z}}\in \tilde{\varphi};\ {\bar z}\in \tilde{g}}(B\otimes z)(\Delta_{\varphi_{\bar{z}}}(P)),\]
where  $\varphi_{\bar z}\in \tilde \varphi$ satisfies $\overline{\varphi_{\bar{z}}(u)}=\overline{u^z},$ for any $u\in P.$
\end{thm}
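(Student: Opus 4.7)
The plan is to combine Lemma~\ref{zero_quotient_prop} with the explicit $\bar G$-grading of $A$ to analyse each summand $A(\Delta_\varphi(P))$ of $\bar N^{\Aut(P)}_A(P)$, and then regroup the surviving pieces according to the $U/U'$-grading.

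First I would restrict the range of $\varphi$. By definition of $K=\Aut_{\bar G}(P)$, if $\varphi\in \Aut(P)\setminus K$ then no $\bar z\in \bar G$ satisfies $\overline{u^z}=\overline{\varphi(u)}$ for all $u\in P$, so every $\Delta_\varphi(P)$-orbit on $\bar G$ has length at least two; Lemma~\ref{zero_quotient_prop} then forces $A(\Delta_\varphi(P))=0$, and consequently $\bar N^{\Aut(P)}_A(P)=\bar N^K_A(P)$.

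For each $\varphi\in K$ I would then split $A$ as $\bigoplus_{\bar g\in \bar G}(B\otimes g)$ and apply Lemma~\ref{zero_quotient_prop} once more to drop the non-singleton orbits, leaving
\[A(\Delta_\varphi(P))=\bigoplus_{\bar z\in \mathcal F(\varphi)}(B\otimes z)(\Delta_\varphi(P)),\]
where $\mathcal F(\varphi):=\{\bar z\in \bar G\mid \overline{u^z}=\overline{\varphi(u)}\ \forall u\in P\}$. A short check (via the argument of the preceding lemma) shows that $\mathcal F(\varphi)$ is a single coset of $C_{\bar G}(\bar P)$ in $N^K_{\bar G}(\bar P)$, that $\mathcal F(\varphi)$ depends only on the class $\tilde\varphi\in K/K_1$, and that the rule $\tilde\varphi\mapsto \mathcal F(\varphi)$ realises the isomorphism $K/K_1\simeq U/U'$. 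Thus every $\tilde g\in U/U'$ picks out a unique $\tilde\varphi\in K/K_1$ with $\mathcal F(\varphi)=\tilde g$ for all $\varphi\in\tilde\varphi$, and regrouping the surviving summands over the pair $(\tilde\varphi,\tilde g)$ yields the announced formula for $\bar N^K_A(P)_{\tilde g}$.

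Specialising to $\tilde g=\bar 1$ gives $\varphi\in K_1$ and $\bar z\in C_{\bar G}(\bar P)=U'/H$; grouping first by $g'\in U'/H$ and observing that $B\otimes g'$ is a $P$-stable $\mathcal{O}$-submodule of $A$ (because $g'\in U'$) exhibits the identity component as $\bigoplus_{g'\in U'/H}\bar N^{K_1}_{B\otimes g'}(P)$. The $N_G(P)$-interior structure descends from the general construction of subsection~\ref{general_const} via the algebra map $\mathcal{O}G\to A$, exactly as in Proposition~\ref{p:ngpcgp}(1).

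The main obstacle I expect is verifying that this decomposition defines an honest $U/U'$-grading compatible with the algebra structure: given $a_i\in (B\otimes z_i)(\Delta_{\varphi_i}(P))$ with $\bar z_i\in\tilde g_i$ and $\varphi_i\in\tilde\varphi_i$, one must check that $a_1a_2$ lies in the $\tilde g_1\tilde g_2$-component. This reduces to the bookkeeping identity that $\varphi_1\circ\varphi_2$ lies in the $K_1$-coset corresponding to $\tilde g_1\tilde g_2$, combined with the product formula in $A=B\otimes_{\mathcal{O}H}\mathcal{O}G$; once this is settled, the remaining claims in the theorem follow formally.
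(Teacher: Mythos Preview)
Your proposal is correct and follows essentially the same route as the paper: both use Lemma~\ref{zero_quotient_prop} first to kill all $\varphi\notin K$, then to discard the non-fixed $\bar G$-components for each $\varphi\in K$, and finally regroup the surviving pieces $(B\otimes z)(\Delta_\varphi(P))$ according to $K/K_1\simeq U/U'$.

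One remark on the ``obstacle'' you flag: the multiplicative compatibility of the $U/U'$-grading is actually lighter than you suggest, because $\bar N^K_A(P)$ is already a $K$-graded algebra by the construction in Section~\ref{sec2}, so the $U/U'$-grading is simply the coarsening along the surjection $K\to K/K_1$; no separate bookkeeping with $\varphi_1\circ\varphi_2$ is needed. The paper instead records the concrete identity $B\otimes z=(B\otimes zx^{-1})\cdot(B\otimes x)$ for $x\in U'$ to exhibit directly that $\bar N^K_A(P)_{\tilde g}\cdot\bar N^{K_1}_A(P)=\bar N^{K_1}_A(P)\cdot\bar N^K_A(P)_{\tilde g}=\bar N^K_A(P)_{\tilde g}$, and observes the $N_G(P)$-interior structure via the $\Delta_{\varphi_{\bar x}}(P)$-invariance of $1\otimes x$ for $x\in N_G(P)$.
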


\begin{proof}  By Lemma \ref{zero_quotient_prop}, we obtain the following decomposition of the extended Brauer quotient
\begin{align*}\bar{N}^{\Aut(P)}_A(P)&=\left(\bigoplus_{\varphi\in K_1}\bar{N}^{\varphi}_A(P)\right) \oplus \bar{N}^{K\setminus K_1}_{A}(P)\\
                &= \left(\bigoplus_{\bar g'\in U'/H}\bar{N}^{K_1}_{B\otimes g'}(P)\right)     \\
                &\oplus     \bigoplus_{\tilde{\varphi}\in K/K_1 }\left(\bigoplus_{\varphi_{\bar{z}}\in \tilde{\varphi};\ \bar{z}\in \tilde{g}}(B\otimes z )(\Delta_{\varphi_{\bar{z}}}(P))\right),
\end{align*}
where in the second sum $\tilde{\varphi}$ corresponds to $\tilde{g}.$

We see that, for any $\tilde{g}$ and any $\bar{z}\in \tilde{g},$
\[B\otimes z=B\otimes zx^{-1}x=(B\otimes zx^{-1})\cdot (B\otimes x),\]
for any $x\in U'.$ Then
\[\bar{N}^{K}_A(P)_{\tilde{g}}\cdot \bar{N}^{K_1}_A(P)= \bar{N}^{K_1}_A(P)\cdot \bar{N}^{K}_A(P)_{\tilde{g}}=\bar{N}^{K}_A(P)_{\tilde{g}}. \]
The fact that this algebra is $N_G(P)$-interior is immediate since for any $x\in N_G(P)$ the element $1\otimes x$ is $\Delta_{\varphi_{\bar{x}}}(P)$-invariant.
\end{proof}

\begin{rem} \label{r:tildeK} 1) The fact that in the above theorem every $\tilde g$-component of $\bar N_A^K(P)$ is a direct sum suggests that this algebra actually has a finer grading than stated. Indeed, it is not difficult to see that $\bar N_A^{K_1}(P)$ is graded by the group
\[\tilde K_1:=\{(\phi, \bar g) \mid \phi\in K_1, \  \bar g\in U'/H \textrm{ such that } \overline{\phi(u)}=\overline{u^g}\},\]
and in general, $\bar N_A^K(P)$ is graded by the group
\[\tilde K:=\{(\phi, \bar g) \mid \phi\in K, \  \bar g\in U/H \textrm{ such that } \overline{\phi(u)}=\overline{u^g}\}.\]

2) Applying the construction to the group algebra $\mathcal{O}G$ yields $\bar N_{\mathcal{O}G}^K(P)=kN_G(P)$. The map
\[N_G(P)\to \tilde K, \qquad g\mapsto (\phi_g,\bar g),\]
where $\phi_g(u)=gug^{-1}$, is a group homomorphism with kernel $C_H(P)$. The $\bar G$-graded algebra map $\mathcal{O}G \to A$ induces by functoriality the $\tilde K$-graded algebra map \[kN_G(P)\to \bar N_A^K(P).\]

3) Observe finally that the construction  of $\bar N_A^K(P)$ does not require the $G$-interiority of $A$. We only need a $\bar G$-graded algebra $A$, a group homomorphism $P\to \bar G$  inducing a $\bar G$-grading on the group algebra $\mathcal{O}P$, and a homomorphism $\mathcal{O}P\to A$ of $\bar G$-graded algebras.
\end{rem}

%\begin{prop}
%With the setting of the above theorem the identity component $\bar{N}^{\Aut^{gr}_1(P)}_A(P)$ is isomorphic to the extended Brauer quotient introduced in  \cite{COTO} of the $N$-interior $G$-algebra $B,$ explicitly
    % $$\bar{N}^{\Aut^{gr}_1(P)}_A(P)\simeq \bar{N}_B^{\Aut^{gr}_1(P)}(P),$$ as $N^{\Aut^{gr}_1(P)}_N(P)$-interior % % $N_G(P)$-algebras.
%\end{prop}

\begin{nim} Next, we establish the connection between $\bar N_A^K(P)\simeq\tilde A(P)$ and the extended Brauer quotient $\bar{N}^{K_1}_B(P)$ of the $H$-interior $G$-algebra $B$, introduced in \cite{COTO}. Recall that $\bar{N}^{K_1}_B(P)$ is an $N_H^{K_1}(P)$-interior $N_G(P)$-algebra constructed formally as in Section 1 above. One can easily see from the definition in \cite[Section 2]{COTO} that $\bar{N}^{K_1}_B(P)$ is actually a $K_1$-graded $N_G^{K_1}(P)$-interior $N_G(P)$-algebra.

Let $Q:=P\cap H$.   Then, as in Section 3,  let \[\tilde{B}:=B\otimes_{\mathcal{O}Q}\mathcal{O}(Q\rtimes K_1)\simeq B\otimes_{\mathcal{O}}\mathcal{O}K_1.\]
\end{nim}

\begin{prop} The $\mathcal{O}$-module $\tilde{B}$ is a $\mathcal{O}\Delta(P\times P)$-module via
\[(b\otimes (1,\varphi))^{(u,u)}=b^u\otimes u^{-1}\cdot (1,\varphi) \cdot u =b^u\otimes (u^{-1}\varphi(u),\varphi)=b^uu^{-1}\varphi(u)\otimes (1,\varphi),\]
for any $u\in P,$ $b\in B$ and $\varphi\in K_1.$ Furthermore, we have the  isomorphism
\[\tilde{B}(\Delta(P\times P))\simeq \bar{N}^{K_1}_B(P)\]
of $K_1$-graded  $N^{K_1}_G(P)$-interior $N_G(P)$-algebras with identity component $B(P)$.
\end{prop}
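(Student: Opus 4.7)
The plan is to exploit the natural $K_1$-grading of $\tilde B$ and reduce the Brauer quotient with respect to $\Delta(P\times P)$ to the twisted Brauer quotients appearing in the construction of $\bar N_B^{K_1}(P)$ from \cite{COTO}.

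First I would verify that the displayed formula indeed defines an $\mathcal{O}\Delta(P\times P)$-module structure on $\tilde B$. The two crucial facts are that for $\varphi\in K_1$ one has $u^{-1}\varphi(u)\in P\cap H=Q$ (because $\overline{\varphi(u)}=\bar u$ by the definition of $K_1$), and that conjugation by elements of $P$ preserves $Q$ since $H\trianglelefteq G$. Hence $b^u\cdot u^{-1}\varphi(u)$ is defined via the $H$-interior structure of $B$, and the three equivalent expressions for $(b\otimes(1,\varphi))^{(u,u)}$ correspond to shuttling the element $(u^{-1}\varphi(u),1)\in Q\rtimes K_1$ across the tensor product $\otimes_{\mathcal{O}Q}$. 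Well-definedness with respect to the $\mathcal{O}Q$-tensor relation and the group-action axioms then follows from a short manipulation using $q^u\cdot u^{-1}\varphi(u)=u^{-1}q\varphi(u)$ for $q\in Q$, together with the semidirect-product rule in $Q\rtimes K_1$.

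Next I would use the isomorphism $\tilde B\simeq B\otimes_{\mathcal{O}}\mathcal{O}K_1$ and the resulting decomposition $\tilde B=\bigoplus_{\varphi\in K_1}B\otimes\varphi$. Since the $\Delta(P\times P)$-action preserves the $K_1$-coordinate, both fixed points and transfer images split accordingly:
\[\tilde B^{\Delta(P\times P)}=\bigoplus_{\varphi\in K_1}B^{\Delta_\varphi(P)}\otimes\varphi,\]
because $b\otimes\varphi$ is $(u,u)$-fixed precisely when $b^u\cdot u^{-1}\varphi(u)=b$, which is the defining relation of $B^{\Delta_\varphi(P)}$ in \cite{COTO}. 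A parallel calculation gives $\Tr_{\Delta(R\times R)}^{\Delta(P\times P)}(B^{\Delta_\varphi(R)}\otimes\varphi)=\Tr_{\Delta_\varphi(R)}^{\Delta_\varphi(P)}(B^{\Delta_\varphi(R)})\otimes\varphi$ for every proper subgroup $R<P$. Passing to the quotient therefore yields
\[\tilde B(\Delta(P\times P))\simeq\bigoplus_{\varphi\in K_1}B(\Delta_\varphi(P))=\bar N_B^{K_1}(P),\]
with the identity component (the summand for $\varphi=1$) equal to $B(P)$, since $\Delta_1(P)$ is the ordinary diagonal.

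It remains to check that this $\mathcal{O}$-linear bijection respects the remaining structures. The $K_1$-grading is built into the decomposition; the algebra structure carries over because the multiplication on $\tilde B$, inherited as in Section 3 from the semidirect product $Q\rtimes K_1$ together with the $H$-interior structure of $B$, descends on the fixed-point quotient to the twisted product that defines $\bar N_B^{K_1}(P)$. The $N_G^{K_1}(P)$-interior structure arises from the map $x\mapsto 1\otimes(1,\varphi_x)$, which lies in $\tilde B^{\Delta(P\times P)}$ because $\varphi_x\in K_1$ for $x\in N_G^{K_1}(P)$; and the $N_G(P)$-algebra structure is induced by functoriality from simultaneous conjugation on $B$ and on $Q\rtimes K_1$. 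The main obstacle, I expect, is this last step: matching the multiplication on the nose and verifying equivariance for both $N_G^{K_1}(P)$ and $N_G(P)$, rather than merely producing an abstract $\mathcal{O}$-isomorphism component by component.
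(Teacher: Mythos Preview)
Your proposal is correct and follows essentially the same route as the paper: decompose $\tilde B$ along its $K_1$-grading, identify each $\varphi$-component of the $\Delta(P\times P)$-fixed points (and transfers) with $B^{\Delta_\varphi(P)}$ (respectively $B^{\Delta_\varphi(P)}_{\Delta_\varphi(R)}$), and assemble the resulting componentwise isomorphisms into the $K_1$-graded algebra isomorphism. The paper's own proof is considerably terser than yours, recording only that $K_1$ preserves $Q$ so that $\tilde B$ is well-defined, writing down the decomposition, and declaring the map $\overline{b\otimes(1,\varphi)}\mapsto\bar b$ on each component; your additional verifications (the role of $u^{-1}\varphi(u)\in Q$, the matching of transfers, and the compatibility with the interior and equivariant structures) are exactly the details the paper leaves to the reader.
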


\begin{proof}
It is clear that for any $\varphi\in K_1$ we have $\varphi(u)\in Q$ for any $u\in Q,$ hence  $K_1$ acts on $Q$ and $\tilde{B}$ is a well-defined $\Delta(P\times P)$-module and we have
\[\tilde{B}(\Delta(P\times P))=\bigoplus_{\varphi\in K_1}(B\otimes (1,\varphi))(\Delta(P\times P)).\]
For any $\varphi\in K_1$ the map
\[(B\otimes (1,\varphi))(\Delta(P\times P))\ni \overline{b\otimes (1,\varphi)}\mapsto \bar{b}\in \bar{N}^{\varphi}_B(P)\] is an isomorphism of $k$-vector spaces. The direct sum of these maps is the required algebra isomorphism.
\end{proof}

\begin{rem} 1) According to Theorem \ref{theisomorphismofquotients} and  Theorem \ref{graded_quotient_version},  we have the decompositions
\begin{align*}
\tilde{A}(P)\simeq \bar{N}^K_A(P)&=\bar{N}^{K_1}_{B\otimes_H U'}(P)\oplus \bar{N}^{K\setminus K_1}_{A}(P)   \\
                                 &=\bar{N}^{K_1}_B(P)\oplus\left( \bigoplus_{\bar x\in U'/H,\ x\notin H} \bar{N}^K_{\substack{B\otimes x}}(P) \right) \oplus \bar{N}^{K\setminus K_1}_{A}(P).
\end{align*}
The above statements show that the $N_G^{K_1}(P)$-interior algebra $\tilde{B}(P)$ can be identified with a unitary subalgebra of $\tilde{A}(P)$, and even of $\bar N_A^{K_1}(P)$, such that the $N_G(P)$-action and the $K_1$-grading are preserved. For the particular case of the $H$-interior $G$-invariant group algebra $B=\mathcal{O}H$, the component $\bar{N}^{K_1}_{B\otimes_H U'}(P)$ is the $N_G(P)$-algebra studied in \cite[Section 5]{CoMaBasicGraded}.

2) The Brauer quotient $B(P)$ of $B$ is a $C_H(P)$-interior $N_G(P)$-algebra. The argument of Proposition \ref{p:ngpcgp} implies that the induced algebra \[B(P)\otimes_{kC_H(P)}kN_G(P)\] is isomorphic to a $\tilde K$-graded subalgebra of $\bar N_A^K(P)$, while  \[B(P)\otimes_{kC_H(P)}kC_G(P)\] is isomorphic to a $\tilde K_1$-graded subalgebra of $\bar N_B^K(P)$.
\end{rem}

%%%%%%%%%%%%%%%%%%%%%%%%%%%%%%%%%%%%%%%%%%%%%%%%%%%%%%%%%%%%%%%%%%%%%%%
\section{Tensor products of algebras}

Recall that if $A$ and $A'$ are two $G$-graded algebras, then the diagonal subalgebra of the $G\times G$-graded algebra $A\otimes A'$ is the $G$-graded subalgebra
\[\Delta(A\otimes A'):=\bigoplus_{g\in G}(A_g\otimes A'_g) .\]
The following result is an extension of \cite[Proposition 3.9]{PuZh}

\begin{thm}\label{prop_ext_3.9} Assume that $A$ and $A'$ are two $G$-interior algebras such that $A'$ has a $P\times P$-invariant $\mathcal{O}$-basis,  and let $K$ be a subgroup of $\Aut(P).$

{\rm 1)} There is an isomorphism
\[\bar{N}^K_{A\otimes_{\mathcal{O}}A'}(P)\simeq \Delta(\bar{N}_A^K(P)\otimes_k \bar{N}_{A'}^K(P))\]
of $K$-graded $N_G^K(P)$-interior algebras.

{\rm 2)} Assume in addition that, as $K$-graded $N^K_G(P)$-interior algebras,
\[\bar{N}_A^K(P)\simeq A(P)\otimes_kkK.\] Then
\[\bar{N}^K_{A\otimes_{\mathcal{O}}A'}(P)\simeq A(P)\otimes_k \bar{N}_{A'}^K(P)\] as $K$-graded $N_G(P)$-interior algebras.
\end{thm}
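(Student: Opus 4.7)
The plan is to adapt \cite[Proposition 3.9]{PuZh} componentwise with respect to the $K$-grading: first set up the natural pairing on each $\varphi$-component, then invoke the $P\times P$-invariant basis hypothesis on $A'$ to identify the relative-trace ideals correctly, and finally substitute into part 2).

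For part 1), fix $\varphi\in K$. The structural map $u\mapsto u\otimes u$ from $P$ into $A\otimes_{\mathcal{O}}A'$ gives, for $a\in A^{\Delta_{\varphi}(P)}$ and $a'\in A'^{\Delta_{\varphi}(P)}$,
\[u(a\otimes a')=ua\otimes ua'=a\varphi(u)\otimes a'\varphi(u)=(a\otimes a')\varphi(u),\]
so the bilinear map $A^{\Delta_{\varphi}(P)}\otimes_{\mathcal{O}}A'^{\Delta_{\varphi}(P)}\to (A\otimes_{\mathcal{O}}A')^{\Delta_{\varphi}(P)}$ is well defined. The main step is to show that after passing to Brauer quotients this induces a $k$-linear isomorphism
\[A(\Delta_{\varphi}(P))\otimes_k A'(\Delta_{\varphi}(P)) \xrightarrow{\sim} (A\otimes_{\mathcal{O}} A')(\Delta_{\varphi}(P)).\]
Here the hypothesis on $A'$ is decisive: choose a $P\times P$-invariant $\mathcal{O}$-basis $X'$ of $A'$; then $\Delta_{\varphi}(P)$ permutes $X'$ via $(u,\varphi(u))\cdot b'=u^{-1}b'\varphi(u)$, and each orbit has a stabilizer of the form $\Delta_{\varphi}(Q)$ for some $Q\le P$. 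Orbits with $Q\subsetneq P$ contribute only to the relative-trace ideal $\sum_{Q<P}(A\otimes_{\mathcal{O}} A')^{\Delta_{\varphi}(P)}_{\Delta_{\varphi}(Q)}$, while orbits fixed by all of $\Delta_{\varphi}(P)$ recover $A^{\Delta_{\varphi}(P)}\otimes A'^{\Delta_{\varphi}(P)}$ modulo the corresponding relative traces on the $A$-factor and the $A'$-factor separately. This trace-decomposition bookkeeping is the main obstacle, but it is essentially the same computation as in the classical case $\varphi=\Id$ (cf.\ \cite[11.10]{The}) since $\Delta_{\varphi}(P)\cong P$ has the same subgroup lattice. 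Summing over $\varphi\in K$ then yields the desired $K$-graded isomorphism $\bar{N}^K_{A\otimes_{\mathcal{O}}A'}(P)\simeq\Delta(\bar{N}_A^K(P)\otimes_k\bar{N}_{A'}^K(P))$, and compatibility with multiplication (diagonal components multiply componentwise), grading, and the $N_G^K(P)$-interior structure $x\mapsto (1\otimes x)\otimes(1\otimes x)$ is automatic from the constructions.

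For part 2), the identification $\bar{N}_A^K(P)\simeq A(P)\otimes_k kK$ gives $\bar{N}_A^K(P)_{\varphi}=A(P)\otimes k\varphi$. Substituting into part 1) yields
\[\bar{N}^K_{A\otimes_{\mathcal{O}}A'}(P)\simeq\bigoplus_{\varphi\in K}\bigl(A(P)\otimes k\varphi\bigr)\otimes_k A'(\Delta_{\varphi}(P)),\]
and absorbing the one-dimensional factor $k\varphi$ into $A'(\Delta_{\varphi}(P))$ collapses this to $A(P)\otimes_k \bar{N}_{A'}^K(P)$, with $K$-grading inherited from the second factor. Since $A(P)$ is trivially $K$-graded and the $N_G(P)$-interior structure passes through the first tensor factor in the obvious way, the isomorphism is automatically one of $K$-graded $N_G(P)$-interior algebras.
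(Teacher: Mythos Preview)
Your proof is correct and follows essentially the same route as the paper: work $\varphi$-componentwise, build the natural map $A^{\Delta_{\varphi}(P)}\otimes A'^{\Delta_{\varphi}(P)}\to (A\otimes A')^{\Delta_{\varphi}(P)}$, pass to Brauer quotients, and sum over $K$; for part~2) substitute the hypothesis and collapse the $k\varphi$-factor. The only difference is emphasis: the paper spells out the trace identity $\Tr_{\Delta_{\varphi}(Q)}^{\Delta_{\varphi}(P)}(c)\otimes\Tr_{\Delta_{\varphi}(R)}^{\Delta_{\varphi}(P)}(c')\in (A\otimes A')_{\Delta_{\varphi}(Q)}^{\Delta_{\varphi}(P)}$ to justify well-definedness and then simply invokes the classical isomorphism $(A\otimes A')(P)\simeq A(P)\otimes A'(P)$, whereas you are more explicit about the orbit decomposition of the $P\times P$-invariant basis under $\Delta_{\varphi}(P)$ that makes each $\varphi$-component an isomorphism.
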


\begin{proof} 1) We consider the  $K\times K$-graded $N^K_G(P)$-interior algebra
\[\bar{N}_A^K(P)\otimes_k \bar{N}_{A'}^K(P)=\bigoplus_{\varphi,\psi\in K}\bar{N}_A^{\varphi}(P)\otimes_k \bar{N}_{A'}^{\psi}(P),\] whose diagonal subalgebra
\[\Delta(\bar{N}_A^K(P)\otimes_k \bar{N}_{A'}^K(P))=\bigoplus_{\varphi\in K}\bar{N}_A^{\varphi}(P)\otimes_k \bar{N}_{A'}^{\varphi}(P)\] is an  $N^K_G(P)$-interior  $K$-graded algebra.
Due to the inclusion
\[A^{\Delta_{\varphi}(P)}\otimes_{\mathcal{O}} (A')^{\Delta_{\varphi}(P)}\subseteq (A\otimes_{\mathcal{O}} A')^{\Delta_{\varphi}(P)},\]
we obtain an $\mathcal{O}$-module map
\[A^{\Delta_{\varphi}(P)}\otimes_{\mathcal{O}} (A')^{\Delta_{\varphi}(P)}\to \bar{N}^{\varphi}_{A\otimes_{\mathcal{O}}A'}(P)\]
 sending $a\otimes a'$ to $\overline{a\otimes a'}.$ If $c\in A^{\Delta_{\varphi}(Q)}$ and $c'\in (A')^{\Delta_{\varphi}(R)},$ for some subgroups $Q$ and $R$ of $P$ then
 \begin{align*} 
\Tr_{\Delta_{\varphi}(Q)}^{\Delta_{\varphi}(P)}(c)\otimes \Tr_{\Delta_{\varphi}(R)}^{\Delta_{\varphi}(P)}(c')& =\Tr_{\Delta_{\varphi}(Q)}^{\Delta_{\varphi}(P)}\left(c\otimes \Tr_{\Delta_{\varphi}(R)}^{\Delta_{\varphi}(P)}(c')\right)  \\
     &= \Tr_{\Delta_{\varphi}(R)}^{\Delta_{\varphi}(P)}\left(\Tr_{\Delta_{\varphi}(Q)}^{\Delta_{\varphi}(P)}(c)\otimes c'\right)   \\ 
     &\in (A\otimes_{\mathcal{O}}A')_{\Delta_{\varphi}( R)}^{\Delta_{\varphi}(P)}\cap (A\otimes_{\mathcal{O}}A')_{\Delta_{\varphi}( Q)}^{\Delta_{\varphi}(P)} .\end{align*} 
This determines an $\mathcal{O}$-module homomorphism
\[\bar{N}_A^{\varphi}(P)\otimes \bar{N}_{A'}^{\varphi}(P) \to \bar{N}^{\varphi}_{A\otimes_{\mathcal{O}}A'}(P), \qquad  \bar{a}\otimes \bar{a'}\mapsto \overline{a\otimes a'}\] 
for every $\varphi\in K.$ The direct sum of all these homomorphism is a $K$-graded algebra homomorphism between $\Delta(\bar{N}_A^K(P)\otimes_k \bar{N}_{A'}^K(P))$ and $\bar{N}^K_{A\otimes_{\mathcal{O}}A'}(P),$  which is in fact an isomorphism of interior $N^K_G(P)$-algebras since by our assumptions we have \[(A\otimes_{\mathcal{O}}A')(P)\simeq A(P)\otimes_k A'(P).\]

2) By the additional assumption we obtain
\[\Delta(\bar{N}_A^K(P)\otimes_k \bar{N}_{A'}^K(P))=\bigoplus_{\varphi\in K}(A(P)\otimes_k k\varphi)\otimes \bar{N}^{\varphi}_{A'}(P).\]
We define the $k$-linear map
\[A(P)\otimes_k \bar{N}_{A'}^{\varphi}(P)\to (A(P)\otimes_k k\varphi)\otimes_k\bar{N}_{A'}^{\varphi}(P), \qquad \bar{a}\otimes \bar{a'}\mapsto (\bar{a}\otimes \varphi)\otimes \bar{a'},\]
for  every $\varphi\in K.$ The sum of these maps determine the required isomorphism of $K$-graded interior $N_G(P)$-algebras between $A(P)\otimes_k \bar{N}_{A'}^{K}(P)$ and $\Delta(\bar{N}_A^K(P)\otimes_k \bar{N}_{A'}^K(P)).$
\end{proof}

\begin{rem} 1) Statement 2) of the previous theorem applies in the  situation of \cite[Proposition 3.8]{PuZh}. More precisely, let
\[A=\End_{\mathcal{O}}(N)\] for an indecomposable endopermutation $\mathcal{O}P$-module $N$, such that $A(P)\neq 0$. Let $Q\leq P$, and let $\delta$ be the unique local point of $Q$ on $A$. Let $K:=F_A(Q_\delta)$.
Then \cite[Proposition 3.8]{PuZh} says that there is an isomorphism
\[\bar N_A^K(Q) \simeq A(Q)\otimes_kkK  \]
of $N^K_P(Q)$-interior $K$-graded algebras.

2) Assume in addition that $A'$ is $\bar G$-graded $G$-interior as in Section 4, and has a $P\times P$-invariant $\mathcal{O}$-basis consisting of $\bar G$-homogeneous elements. Then, by Remark \ref{r:tildeK}, the isomorphism in Theorem \ref{prop_ext_3.9}. 2) is in fact an isomorphism of $\tilde K$-graded $N_G(P)$-interior algebras.
\end{rem}

%\begin{proof} As above $\tilde{S}(Q)$ is a $K$-graded algebra with $K\ni \varphi$-component
%\[\tilde{S}(Q)_{\varphi}=S(\Delta_{\varphi}(Q))\otimes (1,\varphi).\] Note that it is actually a crossed product. On the other hand $C_{\tilde{S}(Q)}(S(Q))$ is also a crossed product of $K$ and $S(Q)$ so that the %multiplication gives the following isomorphism of $K$-graded algebras
%\[S(Q)\otimes_k C_{\tilde{S}(Q)}(S(Q))\simeq \tilde{S}(Q).\] The rest of the proof follows as in the proof of \cite[Proposition 3.8]{PuZh}.
%\end{proof}

%A graded {\color{red} ???} version of \cite[Proposition 3.9]{PuZh} still holds as a consequence of Proposition \ref{prop_ext_3.9}, Theorem \ref{theisomorphismofquotients} and the above proposition. {\color{red} ???}
%\begin{prop}  Let  $S=\End_{\mathcal{O}}(N)$ for an endopermutation $\mathcal{O}P$-module $N$. Let $Q\leq P$ and let
%\[K=\{\varphi\in \Aut(Q)\mid \varphi(u)=u^s\cdot 1_S=u^a\cdot 1_A \mbox{ for some } s\in S^*, \mbox{ } a\in A^*, \mbox{ and any } u\in Q\}.\]
%Assume that $A$ has a $P\times P$-stable $\mathcal{O}$-basis. Then
%\[S(Q)\otimes_k \tilde{A}(Q)\simeq (\widetilde{S\otimes_{\mathcal{O}}A})(Q),\] as $K$-graded $N^K_P(Q)$-interior algebras. {\color{red} ???}
%\end{prop}

%%%%%%%%%%%%%%%%%%%%%%%%%%%%%%%%%%%%%%%%%%%%%%%%%%%%%%%%%%%%%%%%%%%%%
\section{A correspondence for  covering points}

In this section we establish a correspondence between covering points in the case of a $G$-interior algebra that has a stable basis.

\begin{nim} We keep the notations of Section \ref{s:graded}, and we assume that the $G$-interior $\bar G$-graded algebra $A:=B\otimes_{\mathcal{O}H}\mathcal{O}G$ has a $G\times G$-stable $\mathcal{O}$-basis consisting of $\bar G$-homogeneous elements. Further, we assume that $A$ is projective regarded as a left and as a right  $\mathcal{O}G$-module. By these assumptions, $B$ is an $H$-interior permutation $G$-algebra, and it is projective both as a left and a right $\mathcal{O}H$-module.
\end{nim}

%Let again $K:=\Aut^{\bar G}(P)$, $K_1:=\Aut^{\bar 1}(P)$,  $\tilde{A}:=A\otimes_{\mathcal{O}P}\mathcal{O}[P\rtimes K]$, and let $\tilde{B}:=B\otimes_{\mathcal{O}Q}\mathcal{O}[Q\rtimes K_1].$

%Let $Q$ be a $p$-subgroup of $H$ and set $\mathcal{P}=\{P\leq G\mid P \mbox{ is a $p$-subgroup with }  P\cap H=Q\}$

\begin{nim} We fix a normal subgroup $N$ of $G$ that contains $P$ and a point $\beta$  of $N$  on $B$ with defect group $P.$ Then our assumptions and \cite[Theorem 3.1 ]{COTO} imply that  $\tilde{\beta}:=\Br_P(\beta)$ is a point of $N_N(P)$ on $\tilde{B}(P)$ with defect group $P.$

% Consider the set
 %\[\mathcal{R}=\{R\leq G\mid R \mbox{ is a $p$-subgroup  of  $G$ with } R\cap K=P\}. \]
We adopt here the definition of covering points from \cite{COCOVER}.
We say that the point $\alpha$ of $G$ on $A$ covers $\beta$ if $\alpha$ has defect group $P$ and for any $i\in \alpha$ there is an idempotent $j_1\in A^N$ that lies in the conjugacy class of $\beta$ and there is a primitive idempotent $f\in A^N$ belonging to a point with defect group $P$ such that $j_1f=fj_1=f$ and $if=fi=f.$

 Clearly  in this case we consider a particular setting in which a defect group of the points that are covered coincides with a defect group of the points that cover the given ones.
\end{nim}

Now we can state our result.

\begin{thm}  The Brauer homomorhism
\[\Br_P:A^P\to A(P)\] determines a bijective correspondence between the points of $A^G$ with defect group $P$ that cover $\beta$ and the points of $\tilde{A}(P)^{N_G(P)}$ with defect group $P$ that cover $\tilde{\beta}.$
\end{thm}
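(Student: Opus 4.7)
The plan is to apply $\Br_P$ termwise to a representative of $\alpha$ together with the idempotents witnessing its covering of $\beta$, show that the images form a covering witness for $\tilde\beta$ on the quotient side, and invert this via simultaneous lifting of idempotents through $\Br_P$. Theorem \ref{theisomorphismofquotients} identifies $\tilde A(P)\simeq\bar N_A^{K}(P)$, and $A(P)$ sits inside as the identity component of the $K$-grading, so any element of $A(P)^{N_G(P)}$ automatically belongs to $\tilde A(P)^{N_G(P)}$.

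For the forward direction, fix $i\in\alpha$ together with $j_1\in\beta\subseteq B^N\subseteq A^N$ and a primitive $f\in A^N$ of defect group $P$ satisfying $if=fi=f$ and $j_1f=fj_1=f$. Each of $i$, $j_1$, $f$ has defect group $P$, so Brauer's theorem guarantees $\Br_P(i),\Br_P(j_1),\Br_P(f)\ne 0$, and the algebra-homomorphism property of $\Br_P$ preserves the multiplicative relations. Since $\tilde\beta=\Br_P(\beta)$ is a point of $N_N(P)$ on $\tilde B(P)$ by hypothesis, $\Br_P(j_1)\in\tilde\beta$ under the embedding $\tilde B(P)\hookrightarrow\bar N_A^{K}(P)$ described at the end of Section \ref{s:graded}. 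Thus, once $\Br_P(i)$ is shown primitive in $\tilde A(P)^{N_G(P)}$ with defect $P$, the triple $(\Br_P(i),\Br_P(j_1),\Br_P(f))$ witnesses that the associated point covers $\tilde\beta$.

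The technical heart is the primitivity transfer from $A(P)^{N_G(P)}$, where $\Br_P(i)$ is already primitive by the classical Puig correspondence, to the larger algebra $\tilde A(P)^{N_G(P)}$. I would combine the inclusion of the induced algebra $B(P)\otimes_{kC_H(P)}kN_G(P)$ into $\bar N_A^{K}(P)$ given by the last remark of Section \ref{s:graded} with the argument of Proposition \ref{p:ngpcgp}, part 2): the $\tilde K$-grading of Remark \ref{r:tildeK} is respected by the $N_G(P)$-action, so any orthogonal decomposition of $\Br_P(i)$ in $\tilde A(P)^{N_G(P)}$ can be analyzed component by component and must, after taking fixed points, descend to an orthogonal decomposition inside $A(P)^{N_G(P)}$, contradicting primitivity there. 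The defect-group-$P$ property is preserved by $\Br_P$ because it respects relative traces.

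For surjectivity and injectivity, given $\tilde\alpha\in\tilde A(P)^{N_G(P)}$ of defect $P$ covering $\tilde\beta$ with covering witnesses $\bar j_1\in\tilde\beta$ and $\bar f$ primitive of defect $P$, I would lift these three idempotents simultaneously through $\Br_P$ modulo its kernel (which is contained in the radical, being a sum of relative traces from proper subgroups), producing primitive idempotents in $A^G$, $A^N$, $B^N$ that preserve the covering relations. Injectivity follows by lifting the conjugating unit of any equivalence between $\Br_P(i)$ and $\Br_P(i')$ in $\tilde A(P)^{N_G(P)}$. The main obstacle will be the primitivity transfer in paragraph three together with the coordinated lifting of three interlocking idempotents; both rely on the induced-algebra description and the $\tilde K$-grading developed in Sections 3 and 4.
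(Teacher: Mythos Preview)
Your approach differs substantially from the paper's, which is much shorter: the paper simply observes that Theorem~\ref{theisomorphismofquotients} together with \cite[Proposition~3.3]{PuZh} already furnish a bijection between points of $G$ on $A$ with defect group $P$ and points of $N_G(P)$ on $\tilde A(P)$ with defect group $P$, and that this bijection agrees with the one induced by the surjection $\Br_P:A^G_P\to A(P)^{N_G(P)}_P$. The covering compatibility is then read off from \cite[Theorem~3.5]{COCOVER}, using that $N\trianglelefteq G$ forces $N_N(P)\trianglelefteq N_G(P)$. No primitivity transfer and no explicit idempotent lifting are carried out.

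Your direct argument has a genuine gap at exactly the step you flag as the ``technical heart'': the primitivity transfer from $A(P)^{N_G(P)}$ to the larger algebra $\tilde A(P)^{N_G(P)}$. The mechanism you propose --- analyzing an orthogonal decomposition $\Br_P(i)=e_1+e_2$ in $\tilde A(P)^{N_G(P)}$ ``component by component'' via the $\tilde K$-grading and descending to $A(P)^{N_G(P)}$ --- does not work, because idempotents in a group-graded algebra are almost never homogeneous, and the identity component of an idempotent need not itself be idempotent. (For a toy instance, take $kC_2$ with $\operatorname{char}k\ne 2$, graded by $C_2$: the unit is primitive in the identity component $k$ but splits in $kC_2$.) So a decomposition in $\tilde A(P)^{N_G(P)}$ does not in general yield one in $A(P)^{N_G(P)}$. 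The statement you need here is precisely \cite[Proposition~3.3]{PuZh}, and your grading argument does not replace it.

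The same issue blocks your surjectivity step: a primitive idempotent representing a point $\tilde\alpha$ of $\tilde A(P)^{N_G(P)}$ does not lie in the image of $\Br_P$, since $\Br_P$ lands in $A(P)$, the identity component only. You therefore cannot ``lift through $\Br_P$'' until you have first produced an associated primitive idempotent in $A(P)^{N_G(P)}$ --- which is again the content of the Puig--Zhou point bijection that you are trying to bypass.
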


\begin{proof} Theorem \ref{theisomorphismofquotients} and \cite[Proposition 3.3]{PuZh} already provide a bijection between the points of $G$ on $A$ and the points of $N_G(P)$ on $\tilde{A}(P)$ with the same defect group $P.$ Even more, this bijection coincides with the bijection determined by the epimorphism given by the restriction of the Brauer homomorphims
\[\Br_P:A^{G}_P\to A(P)^{N_G(P)}_P. \]
Since $N$ is normal in $G$, hence $N_N(P)$ is also normal in $N_G(P)$, the fact that this bijection restricts to a bijection between the points that cover $\beta $ and $\tilde{\beta}$ is an easy verification given by \cite[Theorem 3.5]{COCOVER}.
\end{proof}

\bigskip

{\bf References}

\end{document}